\providecommand{\U}[1]{\protect\rule{.1in}{.1in}}
\newtheorem{theorem}{Theorem}
\newtheorem{lemma}[theorem]{Lemma}
\newenvironment{proof}[1][Proof]{\noindent\textbf{#1.} }{\ \rule{0.5em}{0.5em}}
\begin{document}

\title{Intersection numbers for normal functions}
\author{C. Herbert Clemens}
\maketitle

\section{Introduction}

The purpose of this note is to simplify and generalize a formula of M. Green
and P. Griffiths for 'normal functions' in complex algebraic geometry
(\cite{GG}, Thm. 5.5.1). Namely, if I am given the normal functions associated
to two primitive Hodge classes in $H^{2n}\left(  W;\mathbb{Z}\right)  $ for a
complex projective manifold $W$ of (complex) dimension $2n$, how do I
calculate the intersection number of the two primitive classes? It turns out
that this question is entirely topological in nature and its answer has
nothing to do with the complex structures of the objects involved. So we will
begin by describing the notion of \textit{topological normal function}
associated to any primitive element of $H^{2n}\left(  W;\mathbb{Z}\right)  $.
Finally in Theorem \ref{a} we give a formula for computing the intersection
number of two cycles from their topological normal functions. This formula
generalizes the formula of Green and Griffiths and completes and simplifies
their \textquotedblleft\textit{Sketch of the proof:}" in the above-cited
reference. The author would like to thank Mark Green and Christian Schnell for
useful discussions while writing this note, and to especially thank Mirel
Caibar for conversations which led to the formulation and proof of the main theorem.

\section{The topological Poincar\'{e} bundle}

Let $W$ be a complex projective manifold of (complex) dimension $2n$, with
polarizing (very ample) line bundle $\mathcal{O}_{W}\left(  1\right)  $. Thus
there is a natural imbedding%
\begin{gather*}
W\rightarrow\mathbb{P}\left(  H^{0}\left(  \mathcal{O}_{W}\left(  1\right)
\right)  \right) \\
w\mapsto\left\{  \lambda\in H^{0}\left(  \mathcal{O}_{W}\left(  1\right)
\right)  :\lambda\left(  w\right)  =0\right\}  .
\end{gather*}
For the incidence divisor $I\subseteq\mathbb{P}\left(  H^{0}\left(
\mathcal{O}_{W}\left(  1\right)  \right)  ^{\vee}\right)  \times
\mathbb{P}\left(  H^{0}\left(  \mathcal{O}_{W}\left(  1\right)  \right)
\right)  $, define%
\[
X=I\cap\left(  \mathbb{P}\left(  H^{0}\left(  \mathcal{O}_{W}\left(  1\right)
\right)  ^{\vee}\right)  \times W\right)  .
\]
The projection%
\[
\chi:X\rightarrow\mathbb{P}\left(  H^{0}\left(  \mathcal{O}_{W}\left(
1\right)  \right)  ^{\vee}\right)  =:\mathbb{P}%
\]
giving the hyperplane sections of $W$ can be restricted to a smooth morphism%
\[
\chi^{sm.}:X^{sm.}\rightarrow\mathbb{P}^{sm.}%
\]
where is the set of smooth hyperplane sections.

Let $X_{p}$ denote the fiber over $p$. The kernel of the restriction map,
$H^{2n}\left(  W\right)  \rightarrow H^{2n}\left(  X_{p}\right)  $ is
independent of the choice of $p\in\mathbb{P}^{sm.}$ and will be denoted as
$H_{prim.}^{2n}\left(  W\right)  $. A celebrated theorem of Deligne \cite{D}
states that the Leray spectral sequence for $\chi^{sm.}$ degenerates at
$E_{2}$. Let $H^{2n}\left(  X^{sm.}\right)  ^{k}$ denote the filtration of
$H^{2n}\left(  X^{sm.}\right)  $ induced by this spectral sequence, where the
index $k$ indicate the degree of the relevant cochain coming from
$\mathbb{P}^{sm.}$. Thus the preimage $H^{2n}\left(  X^{sm.};\mathbb{Z}%
\right)  ^{1}$under the map%
\[
H^{2n}\left(  W;\mathbb{Z}\right)  \rightarrow H^{2n}\left(  X^{sm.}%
;\mathbb{Z}\right)
\]
is precisely $H_{prim.}^{2n}\left(  W;\mathbb{Z}\right)  $. We will construct
the map%
\begin{equation}
H_{prim.}^{2n}\left(  W;\mathbb{Z}\right)  \rightarrow\frac{H^{2n}\left(
X^{sm.};\mathbb{Z}\right)  ^{1}}{H^{2n}\left(  X^{sm.};\mathbb{Z}\right)
^{2}}=H^{1}\left(  \mathbb{P}^{sm.};R^{2n-1}\chi_{\ast}^{sm.}\mathbb{Z}%
_{X^{sm.}}\right) \label{1}%
\end{equation}
explicitly below, as it will be of central importance to the deriviation of
the formula which is the principal goal of this paper.

Let $L_{X}^{\mathbb{Z}}=$ $\frac{R^{2n-1}\chi_{\ast}^{sm.}\mathbb{Z}_{X^{sm.}%
}}{torsion}$ and analogously define the local system $L_{X}^{\mathbb{Q}%
}=R^{2n-1}\chi_{\ast}^{sm.}\mathbb{Q}_{X^{sm.}}$ and $L_{X}^{\mathbb{R}%
}=R^{2n-1}\chi_{\ast}^{sm.}\mathbb{R}_{X^{sm.}}$. $L_{X}^{\mathbb{R}}$ is the
system of flat sections of the $C^{\infty}$-vector bundle $\mathcal{L}_{X}$
with respect to the Gauss-Manin connection. Define%
\[
J_{X}:=\frac{\mathcal{L}_{X}}{L_{X}^{\mathbb{Z}}}.
\]
The torus bundle $J_{X}$ has a natural dual bundle%
\[
J_{X}^{\vee}:=\frac{Hom\left(  \mathcal{L}_{X},\mathbb{R}\right)  }{Hom\left(
L_{X}^{\mathbb{Z}},\mathbb{Z}\right)  }=\frac{\mathcal{L}_{X}^{\vee}}{\left(
L_{X}^{\mathbb{Z}}\right)  ^{\vee}}.
\]
Via the intersection pairing we have a natural isomorphism%
\[
J_{X}\rightarrow J_{X}^{\vee},
\]
however it will be convenient to keep track of the logical distinction between
these two bundles in what follows. In particular, the identity map
$L_{X}^{\mathbb{Z}}\left(  p\right)  \rightarrow L_{X}^{\mathbb{Z}}\left(
p\right)  $ can be thought of as an element%
\[
\varpi_{p}\in\left(  L_{X}^{\mathbb{Z}}\left(  p\right)  \right)  ^{\vee
}\otimes L_{X}^{\mathbb{Z}}\left(  p\right)  \in H^{2}\left(  J_{X}\left(
p\right)  \times J_{X}^{\vee}\left(  p\right)  ;\mathbb{Z}\right)  .
\]
Thus $\varpi_{p}$ is the first Chern class of a canonical $\mathbb{C}^{\ast}%
$-bundle $\mathcal{P}_{p}$ on $J_{X}\left(  p\right)  \times J_{X}^{\vee
}\left(  p\right)  $ called the Poincar\'{e} bundle.

Now $\mathbb{P}^{sm.}\subseteq\mathbb{P}^{dp.}\subseteq\mathbb{P}$ where
$\left(  \mathbb{P}^{dp.}-\mathbb{P}^{sm.}\right)  $ is the smooth locus of
$\left(  \mathbb{P}-\mathbb{P}^{sm.}\right)  $ and consists of those $p$ such
that $X_{p}$ has a single ordinary node. $\left(  \mathbb{P}-\mathbb{P}%
^{dp.}\right)  $ is of codimension-$2$ in $\mathbb{P}$. The bundle $J_{X}$
extends naturally to a family $\hat{J}_{X}$ of abelian Lie groups over
$\mathbb{P}^{dp.}$as follows:

1) give $J_{X}$ the standard (Griffiths) complex structure so that
$\mathcal{L}_{X}$ becomes $\left(  F^{2n-1}\left(  \mathcal{L}_{X}%
\otimes\mathbb{C}\right)  \right)  ^{\vee}$,

2) extend $\left(  F^{2n-1}\left(  \mathcal{L}_{X}\otimes\mathbb{C}\right)
\right)  ^{\vee}$ over $\left(  \mathbb{P}^{dp.}-\mathbb{P}^{sm.}\right)  $ by
using the standard Deligne extension,

3) let $\mathcal{\hat{L}}_{X}$ denote the Deligne extension, considered as a
\textit{real} vector bundle,

4) take the topological closure $\hat{L}_{X}^{\mathbb{Z}}$ of $L_{X}%
^{\mathbb{Z}}$ considered as a real analytic subvariety of the total space
$\left\vert \mathcal{\hat{L}}_{X}\right\vert $ of $\mathcal{\hat{L}}_{X}$.
(See \cite{S}.)

\noindent For $p\in\left(  \mathbb{P}^{dp.}-\mathbb{P}^{sm.}\right)  $, the
fiber $\hat{L}_{X}^{\mathbb{Z}}\left(  p\right)  $ is a partial lattice of
corank-$1$ in $\mathcal{\hat{L}}_{X}\left(  p\right)  $ and one defines%
\begin{align*}
\hat{J}_{X}\left(  p\right)   & =\frac{\mathcal{\hat{L}}_{X}\left(  p\right)
}{\hat{L}_{X}^{\mathbb{Z}}\left(  p\right)  }\\
\hat{J}_{X}^{\vee}\left(  p\right)   & =\frac{\left(  \mathcal{\hat{L}}%
_{X}\left(  p\right)  \right)  ^{\vee}}{\left(  \hat{L}_{X}^{\mathbb{Z}%
}\left(  p\right)  \right)  ^{\vee}}.
\end{align*}
Also for $p\in\left(  \mathbb{P}^{dp.}-\mathbb{P}^{sm.}\right)  $ define
$X_{p}^{\prime}$ as $X_{p}$ with the node $x_{0}$ removed. We have natural
isomorphisms%
\[
H^{2n-1}\left(  X_{p},\left\{  x_{0}\right\}  ;\mathbb{Z}\right)
\rightarrow\hat{L}_{X}^{\mathbb{Z}}\left(  p\right)
\]
and%
\[
\left(  \hat{L}_{X}^{\mathbb{Z}}\left(  p\right)  \right)  \rightarrow
H^{2n-1}\left(  X_{p}^{\prime};\mathbb{Z}\right)  .
\]
The intersection pairing gives an isomorphism%
\[
H^{2n-1}\left(  X_{p},\left\{  x_{0}\right\}  ;\mathbb{Z}\right)  \rightarrow
H^{2n-1}\left(  X_{p}^{\prime};\mathbb{Z}\right)  ^{\vee}%
\]
and so defines an element%
\[
\varpi_{p}\in\left(  \hat{L}_{X}^{\mathbb{Z}}\left(  p\right)  \right)
^{\vee}\otimes\left(  \hat{L}_{X}^{\mathbb{Z}}\left(  p\right)  \right)
^{\vee}=H^{2}\left(  \hat{J}_{X}\left(  p\right)  \times\hat{J}_{X}^{\vee
}\left(  p\right)  ;\mathbb{Z}\right)  .
\]
The class $\varpi_{p}$ extends the family given by $\varpi_{p}$ at smooth
fibers. (See, for example \cite{GK}, \S 5.)

Also $H^{2}\left(  \mathbb{P}^{dp.};\mathbb{Z}\right)  =H^{2}\left(
\mathbb{P};\mathbb{Z}\right)  =\mathbb{Z}.$ And there is a unique
$\mathbb{C}^{\ast}$-bundle $\mathcal{P}$ on $\hat{J}_{X}\times_{\mathbb{P}%
^{dp.}}\hat{J}_{X}^{\vee}$ whose Chern class $c_{1}\left(  \mathcal{P}\right)
$ restricts to $\varpi_{p}$ on each fiber and whose restriction to the
zero-section of $\tilde{J}_{X}\times_{\mathbb{P}^{dp.}}\tilde{J}_{X}^{\vee}$
is zero. We call $\mathcal{P}$ the \textit{normalized Poincar\'{e} bundle}.

\section{Topological normal functions}

Returning to the map $\left(  \ref{1}\right)  $ the cohomology group
$H^{1}\left(  \mathbb{P}^{sm.};L_{X}^{\mathbb{R}}\right)  $ can be computed
using the deRham resolution%
\[
0\rightarrow L_{X}^{\mathbb{R}}\rightarrow\mathcal{L}_{X}\overset{\nabla
}{\longrightarrow}\mathcal{A}_{\mathbb{P}^{sm.}}^{1}\otimes\mathcal{L}%
_{X}\overset{\nabla}{\longrightarrow}\mathcal{A}_{\mathbb{P}^{sm.}}^{2}%
\otimes\mathcal{L}_{X}\overset{\nabla}{\longrightarrow}\ldots
\]
where $\mathcal{A}_{\mathbb{P}^{sm.}}^{i}$ is the sheaf of $\mathbb{R}$-valued
$i$-forms on $\mathbb{P}^{sm.}$ and the differential is given by the
Gauss-Manin connection. Thus any element $\eta\in H_{prim.}^{2n}\left(
W;\mathbb{Z}\right)  $ gives rise to a closed form $\omega_{\eta}\in
\Gamma\left(  \mathcal{A}_{\mathbb{P}^{sm.}}^{1}\otimes\mathcal{L}_{X}\right)
$ which is well-defined modulo an element of $\nabla\left(  \Gamma
\mathcal{L}_{X}\right)  $. $\omega_{\eta}$ is given by the Cartan-Lie formula,
namely%
\[
\eta\mapsto\omega_{\eta}:=\left(  \zeta\mapsto\left\langle \left.  \nu\left(
\zeta\right)  \right\vert \hat{\eta}\right\rangle \right)
\]
where $\nu\left(  \zeta\right)  \in N_{X_{p}\backslash X^{sm.}}$ is any
lifting of $\zeta\in T_{\mathbb{P}^{sm.},p}$ corresponding to a local
$C^{\infty}$-product structure on $X^{sm.}/\mathbb{P}^{sm.}$ near $p$. We
define an associated \textit{topological normal function} $\alpha_{\eta
}:\mathbb{P}^{sm.}\rightarrow J_{X}$ by the rule%
\[
\alpha_{\eta}\left(  p\right)  =%
{\displaystyle\int\nolimits_{p_{0}}^{p}}
\omega_{\eta}\in J_{X}\left(  p\right)  .
\]

\begin{lemma}
i) Let $B\subseteq\mathbb{P}$ be a smooth complex submanifold with
$\mathrm{dim}_{\mathbb{C}}B\leq n$ that meets $\left(  \mathbb{P}%
^{dp.}-\mathbb{P}^{sm.}\right)  $ transversely. Then $\alpha_{\eta}$ extends
to a section $\hat{\alpha}_{\eta}:B\cap\mathbb{P}^{dp.}\rightarrow\hat{J}%
_{X}\times_{\mathbb{P}}B.$

ii) Suppose that $\eta\in H_{prim.}^{2n}\left(  W;\mathbb{Z}\right)  $ is
strongly primitive, that is, $\left.  \eta\right\vert _{D}=0$ for
\textit{every} codimension-$1$ subvariety $D\subseteq W$. Let $B\subseteq
\mathbb{P}$ be a smooth complex submanifold with $\mathrm{dim}_{\mathbb{C}%
}B\leq2n$ meeting $\left(  \mathbb{P}^{dp.}-\mathbb{P}^{sm.}\right)  $
transversely. Then $\alpha_{\eta}$ extends to a section $\hat{\alpha}_{\eta
}:B\cap\mathbb{P}^{dp.}\rightarrow\hat{J}_{X}\times_{\mathbb{P}}B.$
\end{lemma}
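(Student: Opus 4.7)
The plan is to treat extension across $B \cap (\mathbb{P}^{dp.}-\mathbb{P}^{sm.})$ as a purely local question, because off of the nodal locus $\alpha_{\eta}$ is already defined by the integral formula, and the complementary stratum $\mathbb{P}-\mathbb{P}^{dp.}$ is of codimension $\geq 2$ in $\mathbb{P}$. Fix a point $p_{0}\in B\cap (\mathbb{P}^{dp.}-\mathbb{P}^{sm.})$. Using the transversality hypothesis, I would choose a small holomorphic disk $\Delta\subset B$ through $p_{0}$ that meets the nodal locus only at the center, and reduce to extending $\alpha_{\eta}|_{\Delta^{\ast}}$ across $0$ into $\hat{J}_{X}(p_{0})$.

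On $\Delta^{\ast}$ the normal function admits a (multivalued) lift $\tilde{\alpha}_{\eta}$ to the flat bundle $\mathcal{L}_{X}$. Its monodromy around $0$ sits in $L_{X}^{\mathbb{Z}}$, and by Picard--Lefschetz the monodromy operator $T$ differs from the identity by an endomorphism with image in $\mathbb{Z}\delta$, where $\delta$ is the vanishing cycle. Under the Deligne extension used to define $\mathcal{\hat{L}}_{X}(p_{0})$, this logarithmic multivaluedness of $\tilde{\alpha}_{\eta}$ translates into a residue which (after clearing the flat-connection ambiguity) takes values along the vanishing direction $\delta$. But $\hat{L}_{X}^{\mathbb{Z}}(p_{0})$ is precisely the corank-$1$ partial lattice obtained by killing the $\delta$-direction, so \emph{any} residue proportional to $\delta$ descends to zero in $\hat{J}_{X}(p_{0})$, provided the proportionality constant is an integer. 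This then gives a continuous extension $\hat{\alpha}_{\eta}$ across $p_{0}$.

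The content of parts (i) and (ii) is therefore to verify that this proportionality constant is indeed integral, which I would do by interpreting it via the Cartan--Lie formula $\omega_{\eta}(\zeta)=\langle\nu(\zeta)\,|\,\hat{\eta}\rangle$ as an intersection pairing of $\eta$ with a real $2n$-chain in $W$ traced out by the vanishing thimble as $p$ moves in $\Delta$. For part (i), ordinary primitivity of $\eta$ together with the bound $\dim_{\mathbb{C}}B\leq n$ ensures that this intersection integral reduces (via Lefschetz decomposition on fibers of $\chi^{sm.}$) to a topological pairing on $B$ itself that takes integer values. For part (ii), strong primitivity of $\eta$ gives the vanishing $\eta|_{D}=0$ on the codimension-$1$ image of the chain swept out from $\mathcal{X}_{B}\to W$, which is what is needed to push the dimension bound up to $2n$.

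The main obstacle will be justifying this integrality precisely. The logarithmic asymptotics of $\omega_{\eta}$ at a node must be computed via Schmid's theorem \cite{S} in a form compatible with the topological Cartan--Lie description, and the primitivity of $\eta$ must then be translated into a vanishing of everything but the $\delta$-component of the residue, with an integer coefficient. Once that computation is done at a single node, the extension over all of $B\cap\mathbb{P}^{dp.}$ is automatic because the disks $\Delta_{p_{0}}$ cover the nodal locus in $B$ and the local extensions agree with $\alpha_{\eta}$ on their punctured parts.
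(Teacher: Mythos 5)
Your strategy (localize to a transverse disk, analyze the monodromy/residue of the multivalued lift, and argue the singular part is harmless modulo the partial lattice) is not the paper's argument, and as written it contains a genuine gap at its central step. You assert that a residue equal to an integer multiple of the vanishing cycle $\delta$ ``descends to zero in $\hat{J}_{X}\left(  p_{0}\right)  $'' and therefore yields a continuous extension. But $\hat{L}_{X}^{\mathbb{Z}}\left(  p_{0}\right)  $ is a \emph{discrete} corank-$1$ partial lattice, so the image of the real line $\mathbb{R}\delta$ in $\hat{J}_{X}\left(  p_{0}\right)  =\mathcal{\hat{L}}_{X}\left(  p_{0}\right)  /\hat{L}_{X}^{\mathbb{Z}}\left(  p_{0}\right)  $ is a circle, not a point: a term of the form $c\log\left\vert t\right\vert \cdot\delta$ or $c\arg\left(  t\right)  \cdot\delta$ has no limit as $t\rightarrow0$ modulo $\mathbb{Z}\delta$ unless $c=0$. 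Integrality of the residue is therefore not the right target; what must be shown is that the singular part of $\omega_{\eta}$ at the node is actually absent (zero residue), which is strictly stronger, and your sketch gives no mechanism for proving it. Relatedly, the way you propose to use the hypotheses --- ``Lefschetz decomposition on fibers'' and an ``integer-valued topological pairing on $B$'' for (i), and a vaguely described vanishing for (ii) --- does not connect to any step that would force the residue to vanish.

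For comparison, the paper avoids all asymptotic analysis by a short global relative-cohomology argument. Since $N\times_{\mathbb{P}}B$ has complex dimension $\mathrm{dim}_{\mathbb{C}}B-1$, the hypothesis $\mathrm{dim}_{\mathbb{C}}B\leq n$ forces $H^{2n}\left(  N\times_{\mathbb{P}}B\right)  =0$, so $H^{2n}\left(  X^{dp.}\times_{\mathbb{P}}B,N\times_{\mathbb{P}}B;\mathbb{Z}\right)  \rightarrow H^{2n}\left(  X^{dp.}\times_{\mathbb{P}}B;\mathbb{Z}\right)  $ is surjective and $\hat{\eta}$ can be represented by a closed form vanishing identically in a neighborhood of the nodal locus. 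Contracting that representative against a lift of tangent vectors then extends $\omega_{\eta}$ as an everywhere-regular closed $1$-form on $B\cap\mathbb{P}^{dp.}$ taking values in the image of $H^{2n-1}\left(  X_{p},\left\{  x_{0}\right\}  ;\mathbb{R}\right)  \rightarrow\hat{L}_{X}^{\mathbb{R}}\left(  p\right)  $, and $\hat{\alpha}_{\eta}$ is just its integral --- there is simply no singularity to control. In part (ii) the dimension count fails, but strong primitivity supplies the lift instead: the image of $N\times_{\mathbb{P}}B$ in $W$ lies in a divisor $D$ with $\left.  \eta\right\vert _{D}=0$, so $\hat{\eta}$ again lifts to the relative group and the same contraction argument applies. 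If you want to salvage your local approach, you would need to derive the vanishing of the residue from exactly this kind of lifting statement, at which point you have essentially reproduced the paper's proof in local coordinates.
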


\begin{proof}
\textit{i)} Let $N\subseteq X^{dp.}$ denote the nodal locus. To see that
$\alpha_{\eta}$ extends over $B$, we proceed as follows. Since $X^{dp.}$ is
smooth and $N$ is of real codimension $4n$, the map%
\[
H^{2n}\left(  X^{dp.}\times_{\mathbb{P}}B,N\times_{\mathbb{P}}B;\mathbb{Z}%
\right)  \rightarrow H^{2n}\left(  X^{dp.}\times_{\mathbb{P}}B;\mathbb{Z}%
\right)
\]
is an surjective whenever the complex dimension of $N\times_{\mathbb{P}}B$ is
less than $n$, and an isomorphism whenever its complex dimension is less that
$n-1$. So we can consider the pull-back $\hat{\eta}$ of $\eta$ to
$X^{dp.}\times_{\mathbb{P}}B$ as lifting to an element of $H^{2n}\left(
X^{dp.}\times_{\mathbb{P}}B,N\times_{\mathbb{P}}B;\mathbb{Z}\right)  $
represented by a deRham cycle vanishing in a neighborhood of $N\times
_{\mathbb{P}}B$. Thus for any $\zeta\in T_{\mathbb{P}^{dp.},p}$ for $p$ near a
nodal hypersurface, lift $\zeta$ to a section of the (real) normal and
contract it against $\hat{\eta}$. Since $\hat{\eta}$ vanishes along
$N\times_{\mathbb{P}}B$, this gives a closed form on $\mathbb{P}^{dp.}$ which
extends a realization of $\omega_{\eta}$ and has the property that for any
local section of $T_{\mathbb{P}^{dp.}}$ the corresponding element of $\hat
{L}_{X}^{\mathbb{R}}\left(  p\right)  $ is locally invariant, that is, it lies
in the image of%
\begin{equation}
H^{2n-1}\left(  X_{p},\left\{  x_{0}\right\}  ;\mathbb{R}\right)
\rightarrow\hat{L}_{X}^{\mathbb{R}}\left(  p\right)  .\label{contr}%
\end{equation}

\textit{ii)} By hypothesis, the image of $N\times_{\mathbb{P}}B$ lies in a
codimension-$1$ subvariety $D\subseteq W$. So again the pull-back $\hat{\eta}
$ of $\eta$ to $X^{dp.}\times_{\mathbb{P}}B$ lifts to an element of
$H^{2n}\left(  X^{dp.}\times_{\mathbb{P}}B,N\times_{\mathbb{P}}B;\mathbb{Z}%
\right)  $ represented by a deRham cycle vanishing in a neighborhood of
$N\times_{\mathbb{P}}B$. The rest of the proof is as in \textit{i)}.
\end{proof}

We are finally able to state the formula whose proof in the principal goal of
this paper.

\begin{theorem}
\label{a}Given $\eta^{\prime},\eta^{\prime\prime}\in H_{prim.}^{2n}\left(
W;\mathbb{Z}\right)  $,%
\[%
{\displaystyle\int\nolimits_{W}}
\eta^{\prime}\wedge\eta^{\prime\prime}=\left\langle \left.  \varepsilon
\right\vert c_{1}\left(  s_{\eta^{\prime},\eta^{\prime\prime}}^{\ast
}\mathcal{P}\right)  \right\rangle
\]
where%
\begin{gather*}
s_{\eta^{\prime},\eta^{\prime\prime}}:\mathbb{P}^{dp.}\rightarrow\hat{J}%
_{X}\times_{\mathbb{P}^{dp.}}\hat{J}_{X}^{\vee}\\
p\mapsto\left(  \hat{\alpha}_{\eta^{\prime}},\hat{\alpha}_{\eta^{\prime\prime
}}^{\vee}\right)
\end{gather*}
and $\varepsilon$ is a (positively oriented) generator of $H_{2}\left(
\mathbb{P}^{dp.};\mathbb{Z}\right)  =H_{2}\left(  \mathbb{P};\mathbb{Z}%
\right)  =\mathbb{Z}$.
\end{theorem}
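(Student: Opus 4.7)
My plan is to restrict the entire problem to a generic projective line $L \subset \mathbb{P}$ and then match both sides of the claimed equality to a single cup product on the total space over $L$. First I would choose $L$ so that it meets $\mathbb{P}^{dp.}\setminus\mathbb{P}^{sm.}$ transversely at finitely many points and avoids the complex-codimension-$2$ set $\mathbb{P}\setminus\mathbb{P}^{dp.}$. Then $[L]=\varepsilon$ in $H_2(\mathbb{P}^{dp.};\mathbb{Z})$, so by naturality
\[
\langle\varepsilon \mid c_1(s^*_{\eta',\eta''}\mathcal{P})\rangle \;=\; \int_L c_1\bigl((s_{\eta',\eta''}|_L)^*\mathcal{P}\bigr),
\]
and by part (i) of the Lemma (which applies because $\dim_{\mathbb{C}} L = 1 \leq n$) the section $s_{\eta',\eta''}|_L$ is well-defined on all of $L$.

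Second, I would convert $\int_W\eta'\wedge\eta''$ into an integral on $X_L := \chi^{-1}(L)$. The map $X \to W$ is a projective bundle whose fiber over $w$ is the hyperplane of hyperplanes through $w$; for a generic line $L$ the restriction $X_L\to W$ is therefore the blow-up of $W$ along the codimension-$2$ base locus of the pencil parametrized by $L$, hence birational. Consequently
\[
\int_W \eta'\wedge\eta'' \;=\; \int_{X_L}\hat\eta'\big|_{X_L} \cup \hat\eta''\big|_{X_L}.
\]

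Third, I would evaluate the right-hand side using the Leray spectral sequence for $\chi_L\colon X_L\to L$. Since $\dim_{\mathbb{C}} L = 1$, the Leray filtration on $H^{2n}(X_L)$ has only three nontrivial steps, and primitivity of $\eta', \eta''$ places $\hat\eta'|_{X_L}, \hat\eta''|_{X_L}$ in filtration level $1$. The Cartan--Lie description of $\omega_\eta$ identifies their images in $\mathrm{Gr}^1 \cong H^1(L^{sm.}; R^{2n-1}\chi^{sm.}_{L,*}\mathbb{Z})$ with the restrictions of the topological normal functions $\alpha_{\eta'}, \alpha_{\eta''}$. Under the fiberwise intersection pairing $R^{2n-1}\otimes R^{2n-1}\to R^{4n-2}\cong\mathbb{Z}$, their cup product lies in $H^{4n}(X_L)^2 \cong H^2(L^{sm.};\mathbb{Z})$ and represents $\alpha_{\eta'}\cup\alpha_{\eta''}^\vee$. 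Finally I would identify this cup product with $\int_L c_1((s_{\eta',\eta''}|_L)^*\mathcal{P})$ via the defining property of $\mathcal{P}$: because $c_1(\mathcal{P})$ restricts to $\varpi_p$ on every fiber, pulling back along the pair of sections $(\hat\alpha_{\eta'}, \hat\alpha_{\eta''}^\vee)$ yields a line bundle whose first Chern class is exactly the intersection pairing of the two normal functions. This is the family version of the classical Poincar\'e-bundle identity.

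The main obstacle is the treatment of the finitely many nodal points of $L$. Over $L^{sm.} := L\cap\mathbb{P}^{sm.}$ the steps above are essentially standard, but the equality
\[
\int_{L^{sm.}}\alpha_{\eta'}\cup\alpha_{\eta''}^\vee \;=\; \int_L c_1\bigl((s_{\eta',\eta''}|_L)^*\mathcal{P}\bigr)
\]
requires careful accounting of the local contribution at each node. This is precisely why Section 2 constructs the Deligne extensions $\hat J_X, \hat J_X^\vee$, defines the extended tautological class $\varpi_p$ using $H^{2n-1}(X_p,\{x_0\};\mathbb{Z})$ paired against $H^{2n-1}(X_p';\mathbb{Z})$, and normalizes $\mathcal{P}$ along the zero section. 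These data encode exactly the Picard--Lefschetz information needed to match the monodromy-invariant extensions $\hat\alpha_{\eta'}, \hat\alpha_{\eta''}^\vee$ produced by part (i) of the Lemma against the nodal contributions of $c_1(\mathcal{P})$. Verifying this local matching at each node---and thereby completing the Green--Griffiths ``Sketch of the proof''---is where the real work lies.
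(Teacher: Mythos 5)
Your first two reductions coincide with the paper's: restrict to a general line $B=L$ representing $\varepsilon$, and use birationality of $X_B\rightarrow W$ to replace $\int_W\eta'\wedge\eta''$ by $\int_{X_B}\hat\eta'\wedge\hat\eta''$. But the heart of the theorem --- the identification of this integral with $\left\langle\varepsilon\mid c_1(s^*_{\eta',\eta''}\mathcal{P})\right\rangle$ --- is not actually argued: you invoke ``the family version of the classical Poincar\'e-bundle identity,'' which \emph{is} the statement to be proved, and you then declare that the real work is the local analysis at the nodes. That is a genuine gap, and it is also not where the paper locates the work. The paper's substantive step is an explicit computation over the smooth locus: cut $B$ along non-intersecting paths from a basepoint to the critical values to get a simply connected region $U$ over which $X_U\cong X_{p_0}\times U$, apply Fubini to write $\int_{X_B^{sm.}}\hat\eta'\wedge\hat\eta''=\int_U\left\langle\omega_{\eta'}\mid\omega_{\eta''}^{\vee}\right\rangle$, choose a symplectic framing $\{\alpha_j,\beta_j\}$ of $L_{X_B}^{\mathbb{Z}}$ over $U$, and observe that both this integral and the evaluation of $c_1(\mathcal{P})$ on the graph $(\alpha_{\eta'},\alpha_{\eta''}^{\vee})$ reduce to the same expression
\[
\int_U \left(  \left(  f_{j}^{\prime}\right)  \left(  g_{j}^{\prime}\right)  \right)
\begin{pmatrix} 0 & I\\ -I & 0 \end{pmatrix}
\begin{pmatrix} \left(  f_{j}^{\prime\prime}\right) \\ \left(  g_{j}^{\prime\prime}\right) \end{pmatrix} dx_1\,dx_2 ,
\]
because the isomorphism $J_X\rightarrow J_X^{\vee}$ is multiplication by the symplectic matrix and $d\alpha_{\eta}=\omega_{\eta}$. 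Without some version of this computation your proof has no content at the decisive step.

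There is also a technical obstruction to the route you sketch through the Leray spectral sequence on the open part. You place the product of the two classes in $H^2(L^{sm.};\cdot)$, but $L^{sm.}$ is an open Riemann surface, so this group vanishes; the pairing cannot be extracted as a cup product of cohomology classes on $X_L^{sm.}$ and must instead be computed as an integral of forms (the complement of $X_L^{sm.}$ in $X_L$ has measure zero, which is why $\int_{X_L}=\int_{X_L^{sm.}}$ as \emph{integrals} even though the cohomological statement fails). This is precisely why the paper works with the de Rham representative $\omega_\eta$, the cut region $U$, and Fubini rather than with Leray graded pieces. Your instinct that the Deligne extensions, the class $\varpi_p$ on the nodal fibers, and the normalization of $\mathcal{P}$ along the zero section are what make the statement well posed over all of $\mathbb{P}^{dp.}$ is correct, but in the paper these enter through the construction of $s_{\eta',\eta''}$ and of $\mathcal{P}$ itself; they are not where the proof of the intersection formula is carried out.
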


\section{Proof of the theorem}

We may assume that $\varepsilon$ is given by the linear inclusion of a general
projective line into $\mathbb{P}^{dp.}$. That is we reduce to the case of a
Lefschetz pencil%
\[
\chi_{B}:X_{B}\rightarrow B
\]
with local system $L_{X_{B}}$ over $B^{sm.}$. Since $X_{B}\rightarrow W$ is
birational%
\[%
{\displaystyle\int\nolimits_{W}}
\eta^{\prime}\wedge\eta^{\prime\prime}=%
{\displaystyle\int\nolimits_{X_{B}}}
\hat{\eta}^{\prime}\wedge\hat{\eta}^{\prime\prime}=%
{\displaystyle\int\nolimits_{X_{B}^{sm.}}}
\hat{\eta}^{\prime}\wedge\hat{\eta}^{\prime\prime}%
\]
where $\hat{\eta}$ denotes the pull-back of $\eta$ and $X_{B}^{sm.}%
=X_{B}\times_{B}B^{sm.}$. The map associated with the Leray spectral sequence
for $\chi_{B}^{sm.}$ as in $\left(  1\right)  $ is given by putting a local
product structure on $X_{B}$ and writing%
\[
\eta\mapsto\omega_{\eta}:=\left(  \zeta\mapsto\left\langle \left.  \nu\left(
\zeta\right)  \right\vert \hat{\eta}\right\rangle \right)
\]
where $\nu\left(  \zeta\right)  \in T_{X_{B}\left(  p\right)  }$ is the
lifting of $\zeta\in T_{B^{sm.},p}$ given by the product structure. Then it is
immediate that $\omega_{\eta}$considered as an element of $\mathcal{A}%
_{\mathbb{P}^{sm.}}^{1}\otimes\mathcal{L}_{X}$ is independent of the product
structure and direct computation shows that $\nabla\omega_{\eta}=0$.

Next, following Lefschetz, we pick a basepoint $p_{\infty}\in B^{sm.}$ and
draw non-intersecting paths from $p_{\infty}$ to the points of $B-B^{sm.}$. We
let $U\subseteq B$ denote the simply connected region comprising the
complement of the union of those paths. Then for any $p_{0}\in U$,%
\[
X_{U}:=X_{B}\times_{B}U
\]
is isomorphic over $U$ to $X_{p_{0}}\times U$. And by the Fubini theorem%
\begin{align}%
{\displaystyle\int\nolimits_{X_{B}^{sm.}}}
\hat{\eta}^{\prime}\wedge\hat{\eta}^{\prime\prime}  & =%
{\displaystyle\int\nolimits_{U}}
\hat{\eta}^{\prime}\wedge\hat{\eta}^{\prime\prime}\nonumber\\
& =%
{\displaystyle\int\nolimits_{U}}
\left(  \zeta^{\prime}\wedge\zeta^{\prime\prime}\mapsto%
{\displaystyle\int\nolimits_{X_{p}}}
\left\langle \left.  \nu\left(  \zeta^{\prime}\right)  \right\vert \hat{\eta
}^{\prime}\right\rangle \wedge\left\langle \left.  \nu\left(  \zeta
^{\prime\prime}\right)  \right\vert \hat{\eta}^{\prime\prime}\right\rangle
\right) \label{2}\\
& =%
{\displaystyle\int\nolimits_{U}}
\left\langle \left.  \omega_{\eta^{\prime}}\right\vert \omega_{\eta
^{\prime\prime}}^{\vee}\right\rangle .\nonumber
\end{align}

But now choose a symplectic framing $\left\{  \alpha_{j},\beta_{j}\right\}  $
of $L_{X_{B}}^{\mathbb{Z}}$ over $U$ and write%
\begin{align*}
\omega_{\eta^{\prime}}  & =%
{\displaystyle\sum\nolimits_{i=1,2}}
\left(
{\displaystyle\sum\nolimits_{j}}
f_{j,i}^{\prime}\alpha_{j}+%
{\displaystyle\sum\nolimits_{j}}
g_{j,i}^{\prime}\beta_{j}\right)  dx_{i}\\
\omega_{\eta^{\prime\prime}}  & =%
{\displaystyle\sum\nolimits_{i=1,2}}
\left(
{\displaystyle\sum\nolimits_{j}}
f_{j,i}^{\prime\prime}\alpha_{j}+%
{\displaystyle\sum\nolimits_{j}}
g_{j,i}^{\prime\prime}\beta_{j}\right)  dx_{i}.
\end{align*}
The integral in $\left(  \ref{2}\right)  $ becomes%
\[%
{\displaystyle\int\nolimits_{U}}
\left(  \left(  f_{j}^{\prime}\right)  \left(  g_{j}^{\prime}\right)  \right)
\left(
\begin{array}
[c]{cc}%
0 & I\\
-I & 0
\end{array}
\right)  \left(
\begin{array}
[c]{c}%
\left(  f_{j}^{\prime\prime}\right) \\
\left(  g_{j}^{\prime\prime}\right)
\end{array}
\right)  dx_{1}dx_{2}.
\]

On the other hand, over $U$ we can write%
\begin{align*}
\alpha_{\eta^{\prime}}\left(  p\right)   & =%
{\displaystyle\sum\nolimits_{j}}
{\displaystyle\int\nolimits_{p_{0}}^{p}}
\left(
{\displaystyle\sum\nolimits_{i=1,2}}
f_{j,i}^{\prime}dx_{i}\right)  \alpha_{j}+%
{\displaystyle\sum\nolimits_{j}}
\left(
{\displaystyle\sum\nolimits_{i=1,2}}
g_{j,i}^{\prime}dx_{i}\right)  \beta_{j}\\
\alpha_{\eta^{\prime\prime}}\left(  p\right)   & =%
{\displaystyle\sum\nolimits_{j}}
{\displaystyle\int\nolimits_{p_{0}}^{p}}
\left(
{\displaystyle\sum\nolimits_{i=1,2}}
f_{j,i}^{\prime\prime}dx_{i}\right)  \alpha_{j}+%
{\displaystyle\sum\nolimits_{j}}
\left(
{\displaystyle\sum\nolimits_{i=1,2}}
g_{j,i}^{\prime\prime}dx_{i}\right)  \beta_{j}%
\end{align*}
and notice that the isomorphism $J_{X}\rightarrow J_{X}^{\vee}$ is given, in
terms of the above framing, exactly by multiplication by the matrix $\left(
\begin{array}
[c]{cc}%
0 & I\\
-I & 0
\end{array}
\right)  $. So $c_{1}\left(  \mathcal{P}\right)  $ is computed on the graph%
\[
\left(  \alpha_{\eta^{\prime}},\alpha_{\eta^{\prime\prime}}^{\vee}\right)
\]
by the integral%
\[%
{\displaystyle\int\nolimits_{U}}
d\alpha_{\eta^{\prime}}\left(
\begin{array}
[c]{cc}%
0 & I\\
-I & 0
\end{array}
\right)  d\alpha_{\eta^{\prime\prime}}.
\]

\section{Relationship with classical normal functions}

Suppose now that $\eta\in H^{2n}\left(  W;\mathbb{Z}\right)  \cap
F^{2n-k}H^{2n}\left(  W;\mathbb{C}\right)  $. Since $L_{X}^{\mathbb{C}}$ is
the sheaf of flat sections of a holomorphic vector bundle $\mathcal{L}_{X}$,
we have the resolution of $L_{X}^{\mathbb{C}}\rightarrow\mathcal{L}%
_{X}^{\mathbb{C}}$ and a morphism of complexes of fine sheaves%
\begin{equation}%
\begin{array}
[c]{ccccccc}%
\mathcal{A}^{0}\left(  _{X}^{\mathbb{R}}\right)  & \overset{\nabla
}{\longrightarrow} & \mathcal{A}^{1}\left(  L_{X}^{\mathbb{R}}\right)  &
\overset{\nabla}{\longrightarrow} & \mathcal{A}^{2}\left(  L_{X}^{\mathbb{R}%
}\right)  & \overset{\nabla}{\longrightarrow} & \ldots\\
\downarrow &  & \downarrow &  & \downarrow &  & \\
\mathcal{A}^{0,0}\left(  \left(  F^{k}\mathcal{L}_{X}^{\mathbb{C}}\right)
^{\vee}\right)  & \overset{\nabla^{0,1}}{\longrightarrow} & \mathcal{A}%
^{0,1}\left(  \left(  F^{k}\mathcal{L}_{X}^{\mathbb{C}}\right)  ^{\vee}\right)
& \overset{\nabla^{0,1}}{\longrightarrow} & \mathcal{A}^{0,2}\left(  \left(
F^{k}\mathcal{L}_{X}^{\mathbb{C}}\right)  ^{\vee}\right)  & \overset
{\nabla^{0,1}}{\longrightarrow} & \ldots
\end{array}
\label{Hcom}%
\end{equation}
where $F^{\ast}\mathcal{L}_{X}^{\mathbb{C}}$ denotes the Hodge filtration and
the mapping
\[
L_{X}^{\mathbb{R}}\rightarrow\left(  F^{k}\mathcal{L}_{X}^{\mathbb{C}}\right)
^{\vee}%
\]
is given by the intersection pairing. We let $L_{X,van.}^{\mathbb{Q}}\subseteq
L_{X}^{\mathbb{Q}}$ denote the sub-local-system whose fiber is the image of
the residue map $H^{2n}\left(  W-X_{p}\right)  \rightarrow H^{2n-1}\left(
X_{p}\right)  $ , $L_{W}^{\mathbb{Q}}$ the (trivial) local system with fiber
$H_{prim.}^{2n}\left(  W;\mathbb{Q}\right)  $, and $L_{W|X}^{\mathbb{Q}}$ the
local system with fiber $H^{2n}\left(  W,X_{p};\mathbb{Q}\right)  $. Thus
$\eta$ can be considered as an element of the Deligne cohomology group%
\[
\mathbb{H}^{0}\left(  L_{X}^{\mathbb{Q}}\rightarrow\left(  F^{k+1}%
\mathcal{L}_{W}^{\mathbb{C}}\right)  ^{\vee}\right)  .
\]
Let $L_{X,van.}^{\mathbb{Q}}$ denote the vanishing cohomology. Then, under the
connection homomorphism associated to the short exact sequence of complexes%
\[%
\begin{array}
[c]{ccccc}%
L_{X,van.}^{\mathbb{Q}} & \rightarrow & L_{W|X}^{\mathbb{Q}} & \rightarrow &
L_{W}^{\mathbb{Q}}\\
\downarrow &  & \downarrow &  & \downarrow\\
\left(  F^{k}\mathcal{L}_{X,van.}^{\mathbb{C}}\right)  ^{\vee} & \rightarrow &
\left(  F^{k+1}\mathcal{L}_{W|X}^{\mathbb{C}}\right)  ^{\vee} & \rightarrow &
\left(  F^{k+1}\mathcal{L}_{W}^{\mathbb{C}}\right)  ^{\vee}%
\end{array}
\]
and the inclusion map%
\[%
\begin{array}
[c]{ccc}%
L_{X,van.}^{\mathbb{Q}} & \rightarrow & L_{X}^{\mathbb{Q}}\\
\downarrow &  & \downarrow\\
\left(  F^{k}\mathcal{L}_{X,van.}^{\mathbb{C}}\right)  ^{\vee} & \rightarrow &
\left(  F^{k}\mathcal{L}_{X}^{\mathbb{C}}\right)  ^{\vee},
\end{array}
\]
$\eta$ induces an element $\left\{  \omega_{\eta}^{k}\right\}  $ of the
Deligne cohomology group%
\[
\mathbb{H}^{1}\left(  L_{X}^{\mathbb{Q}}\rightarrow\left(  F^{k}%
\mathcal{L}_{X}^{\mathbb{C}}\right)  ^{\vee}\right)
\]
which maps to $\left\{  \omega_{\eta}\right\}  $ under the natural map%
\[
\mathbb{H}^{1}\left(  L_{X}^{\mathbb{Q}}\rightarrow\left(  F^{k}%
\mathcal{L}_{X}^{\mathbb{C}}\right)  ^{\vee}\right)  \rightarrow H^{1}\left(
L_{X}^{\mathbb{Q}}\right)  .
\]

Via the resolutions $\left(  \ref{Hcom}\right)  $ therefore, $\left\{
\omega_{\eta}^{k}\right\}  $ can be represented by a form $\omega_{\eta}^{k}$
which maps to zero in $\Gamma\mathcal{A}^{0,1}\left(  \left(  F^{k}%
\mathcal{L}_{X}\right)  ^{\vee}\right)  $, that is, by a holomorphic one-form
with coefficients in $\left(  F^{k}\mathcal{L}_{X}^{\mathbb{C}}\right)
^{\vee}$. Picking a basepoint $p_{0}\in\mathbb{P}^{sm.}$, we define a
$k$\textit{-normal function} as the multi-valued function%
\begin{equation}
\nu_{\eta}^{k}\left(  p\right)  :=%
{\displaystyle\int\nolimits_{p_{0}}^{p}}
\omega_{\eta}^{k}\in\left\vert \left(  F^{k}\mathcal{L}_{X}^{\mathbb{C}%
}\right)  ^{\vee}\right\vert .\label{nlfn}%
\end{equation}
The classical normal function is simply an $n$-normal function (with
logarithmic growth at $\left(  \mathbb{P}^{dp.}-\mathbb{P}^{sm.}\right)  $).

\end{document}